\newtheorem{theorem}{Theorem}
\newtheorem{lemma}{Lemma} 
\newtheorem{definition}{Definition}
\newtheorem{corollary}{Corollary}
\DeclareMathOperator{\rk}{rk}
\DeclareMathOperator{\val}{val} \DeclareMathOperator{\Supp}{Supp} \DeclareMathOperator{\divisor}{div}
\newtheorem*{subject}{2000 Mathematics Subject Classification}
\newtheorem*{keywords}{Keywords}
\author{Marc Coppens\footnote{KU  Leuven, Technologiecampus Geel, Departement Elektrotechniek (ESAT),
Kleinhoefstraat 4, B-2440 Geel, Belgium; email: marc.coppens@kuleuven.be.
Partially supported by the FWO-grant 1.5.012.13N}}
\title{Free divisors on metric graphs}
\date{}
\begin{document}
\maketitle \noindent

\begin{abstract}
On a metric graph we introduce the notion of a free divisor as a replacement for the notion of a base point free complete linear system on a curve. By means of an example we show that the Clifford inequality is the only obstruction for the existence of very special free divisors on a graph. This is different for the situation of base point free linear systems on curves. It gives rise to the existence of many types of divisors on graphs that cannot be lifted to curves maintaining the rank and it also shows that classifications made for linear systems of some fixed small positive Clifford index do not hold (exactly the same) on graphs.
\end{abstract}

\begin{subject}
14H51; 14T05; 05C99
\end{subject}

\begin{keywords}
graph, free divisor, Clifford index
\end{keywords}

\section{Introduction}\label{section1}

The theory of divisors on a compact metric graph $\Gamma$ is  developed very similar to the theory of divisors on a smooth projective curve $C$.
In their paper \cite{ref3} the authors proved a Riemann-Roch theorem for metric graphs with a statement completely similar to the Riemann-Roch theorem for curves.
This was an extension of a similar result for finite graphs in \cite{ref2}.
In this paper, when talking about a graph we mean a metric graph.

Instead of the dimension of the complete linear system associated to a divisor on a curve one has the concept of the rank $\rk (D)$ of a divisor $D$ on a graph $\Gamma$.
On a graph $\Gamma$ there is a well-defined canonical divisor $K_{\Gamma}$ and an effective divisor $D$ on $\Gamma$ is called special in case the rank of $K_{\Gamma}-D$ is at least $0$.
As in the case of curves the rank of a non-special divisor $D$ on $\Gamma$ is completely determined by its degree $\deg (D)$ because of the Riemann-Roch Theorem.
For a general effective special divisor $D$ either its rank or the rank of $K_{\Gamma}-D$ is zero.
An effective divisor $D$ such that both $\rk (D)>0$ and $\rk (K_{\Gamma}-D)>0$ is called very special and when considering the possible ranks of divisors one can restrict to very special divisors.
Such divisors have their degree between 2 and 2g-4.
If the genus of the graph is less than 2 then there are no very special divisors on the graph, so from now on we only consider graphs of genus $g$ at least 3.
It follows easily from the Theorem of Riemann-Roch that for a very special divisor $D$ on a metric graph $\Gamma$ one has $\deg (D)\geq 2\rk (D)$.
In the case of curves this corresponding statement is part of the so-called Clifford Theorem.

As in the case of curves, and for the same reason, this inequality is the only obstruction for the existence of a graph $\Gamma$ having a very special effective divisor $D$ of a prescribed degree $d$ between 2 and $2g-4$ and rank $r$.
In case $\Gamma$ is a hyperelliptic graph of genus $g$, meaning it has an effective divisor $E$ of degree 2 and rank 1, then $rE+F$ with $F$ a general effective divisor on $\Gamma$ of degree $d-2r$ is a very special divisor of degree $d$ and rank $r$ (of course we assume $2\leq d\leq 2g-4$ and $d\geq 2r$).
In case $D$ has rank $r$ and $P$ is any point on $\Gamma$ then also $D+P$ has rank at least $r$.
In the case of curves if a divisor $D$ satisfies $\dim (D)=r$ and $P$ is a point on $C$ such that $\dim (D+P)=r$ the $P$ is a fixed point of the complete linear system $\vert D+P\vert$, meaning every divisor of $\vert D+P\vert$ contains $P$.
In the relationship between linear systems on curves and projective realisations of the curve one is only interested in linear systems without base points.
The examples mentioned above in the case of hyperelliptic curves are not base point free in case $d-2r>0$. 
In the case one restricts to base point free linear systems on curves one gets more obstructions.
As an example, if $p$ is a prime number and $D$ is an effective divisor on a curve $C$ of degree $p$ such that $\vert D\vert$ is a base point free linear system of dimension 2 then $C$ is birationally equivalent to a plane curve of degree $p$ and therefore $g\leq p(p-1)/2$.
In the case of graphs, in case $D$ is an effective divisor of rank at least 1 such that for each point $P$ on $\Gamma$ one has $\rk (D-P)<\rk (D)$ then we say $D$ is a free divisor on $\Gamma$.
In the case of curves the corresponding definition is equivalent to $\vert D\vert$ being base point free.
In the case of graphs it could be that a divisor $D$ of rank at least 1 that is not free does not contain a point $P$ such that each effective divisor linearly equivalent to $D$ contains $P$.
In contrast to the case of curves we show that for each genus $g$ and for each degree $2\leq d\leq 2g-4$ and each rank $r$ with $d-2r\geq 0$ there exist a graph $\Gamma$ containing a very special effective free divisor $D$ of degree $d$ and rank $r$.

We mention a few consequences concerning the differences between the theories of divisors on curves and on graphs.
In his paper \cite{ref8} the author describes a degeneration such that given a divisor $D$ of degree $d$ with $\dim (D)=r$ on a curve $C$ one obtains a graph $\Gamma$ having a divisor $\tau (D)$ of degree $d$ and rank at least $r$.
Given an effective divisor $D$ on a graph $\Gamma$ of degree $d$ and rank $r$ one is interested in conditions implying it comes from a divisor on a curve with the same rank and dimension.
In \cite{ref10} and \cite{ref11} it is shown that this correspondence between divisors on graphs and curves is very good in the case of graphs of genus at most 3 and in case of hyperelliptic graphs.
Moreover for every genus $g$ in \cite{ref12} one obtains the existence of graphs for which this corresponde is very good.
However in \cite{ref13} one obtains graphs of genus 4 such that not all effective divisors can be lifted to the same curve such that the rank on the graph is equal to the dimension of the associated complete linear system on the curve.
More examples of this kind are obtained, see e.g. \cite{ref14}.
The examples in this paper show the existence of types of divisors on graphs that cannot be lifted because such divisors do not exist on curves.
As an example, for all genus $g\geq 7$ we obtain graphs $\Gamma$ having a free divisor of degree 5 and rank 2.
If such a divisor would be a specialisation of a divisor $D$ on a curve $C$ then $\mid D\mid$ has to be a base point free linear system on $C$ of degree 5  and dimension at least 2. Such curves do not exist.

Associated to a very special divisor $D$ is the Clifford index of $D$ defined by $c(D)=\deg (D)-2\rk (D)$.
The inequality of Clifford is equivalent to $c(D)\geq 0$.
In the case of curves the second part of Clifford's Theorem is the classification of very special divisors of Clifford index 0.
This classification also holds for graphs.
In \cite{ref15} it is proved that in case a graph has a very special divisor of Clifford index 0 then the graph is hyperelliptic.
Moreover, it is proved in \cite{ref15} that in case $\Gamma$ is a hyperelliptic graph and $E$ is a divisor of degree 2 and rank 1 on $\Gamma$ then each very special divisor on $\Gamma$ of Clifford index 0 is linearly equivalent to a multiple of $E$.
In the case of curves, for small values of the Clifford index there exist classifications of divisors of the given Clifford index.
As a next case in \cite{ref16} it is proved that on a non-hyperelliptic curve each very special divisor of degree at most $g-1$  of Clifford index 1 has dimension at most 2 (and in case it has dimension 2 then $g\leq 6$).
In this paper for all genus $g\geq 3$ and all $1\leq r\leq g-2$ we show the existence of a graph $\Gamma$ of genus $g$ having a divisor $D$ of degree $2r+1$ and rank $r$ (hence $c(D)=1$).
Therefore the classifications of linear systems on curves of small Clifford index do not hold for graphs in case of non-zero Clifford index.

\section{Generalities}\label{section2}
A \emph{metric graph} $\Gamma$ (shortly a graph) is a compact connected metric space such that for each point $P$ on $\Gamma$ there exists $n\in \mathbb{Z}$ with $n\geq 1$ such that some neighbourhood of $P$ is isometric to $\{ z\in \mathbb{C} : z=te^{2k\pi /n} \text{ with } t\in [0,r]\subset \mathbb{R} \text { and } k \in \mathbb{Z} \}$ for some $r> 0$ and with $P$ corresponding to 0.
The \emph{valence} $\val (P)$ is equal to $n$.
We say that $P$ is a \emph{vertex} of $\Gamma$ if $\val (P) \neq 2$.

Let $V(\Gamma )$ be the set of vertices of $\Gamma$.
A connected component $e$ of $\Gamma \setminus V(\Gamma )$ is called an \emph{edge} of $\Gamma$.
We write $\overline{e}$ to denote its closure and we call it a closed edge of $\Gamma$.
In case $\overline{e} \setminus e$ is just one vertex of $\Gamma$ then we call $\overline {e}$ a \emph{loop}.
In case $V(\Gamma )=\emptyset$ then $\Gamma$ is also considered as being a closed loop.
Let $E(\Gamma )$ be the set of closed edges of $\Gamma$. The \emph{genus} of $\Gamma$ is defined by

\[
g(\Gamma ) = \vert E(\Gamma ) \vert - \vert V(\Gamma ) \vert +1 \text { .}
\]

An \emph{arc} on $\Gamma$ based at $P$ is a metric map $\tau : [0,a] \rightarrow \Gamma$ with $\tau (0)=P$.
Two such arcs $\tau$ and $\tau '$ are called equivalent in case $\tau (\varepsilon) = \tau ' (\varepsilon) $ for $\varepsilon >0$ very small.
An equivalence class of such arcs is called a \emph{tangent direction} of $\Gamma$ at $P$ and $T_P(\Gamma )$ is the set of tangent directions of $\Gamma$ at $P$.
Clearly $T_P (\Gamma )$ has $\val (P)$ tangent directions.

A \emph{divisor} $D$ on a metric graph $\Gamma$ is a finite formal linear combination $\sum _{P \in \Gamma} n_PP$ of points on $\Gamma$ with integer coefficients (hence $n_P \neq 0$ for finitely many points $P$ an $\Gamma$).
We also write $D(P)$ to denote the coefficient $n_P$.
The \emph{degree} of $D$ is $\deg (D)=\sum _{P\in \Gamma} n_P$.
We say $D$ is \emph{effective} if $n_P\geq 0$ for all $P\in \Gamma$.
The \emph{canonical divisor} of $\Gamma$ is

\[
K_{\Gamma} = \sum _{P \in \Gamma}(\val (P)-2)P
\]
(since $\Gamma$ is compact one has $\val (P) \neq 2$ for finitely many points $P$ on $\Gamma$).
A \emph{rational function} on $\Gamma$ is a continuous function $f: \Gamma \rightarrow \mathbb{R}$ that can be described as a piecewise affine function with integer slopes on the edges.
For $P \in \Gamma$ we define $\divisor (f)(P)$ as being the sum of all slopes of $f$ on $\Gamma$ in all directions emanating from $P$.
In this way $f$ defines a divisor $\divisor (f)=\sum _{P \in \Gamma} \left( \divisor (f)(P) \right) P$.
Two divisors $D_1$ and $D_2$ are called \emph{linearly equivalent} if $D_2 - D_1= \divisor (f)$ for some rational function $f$ on $\Gamma$.
For a divisor $D$ we define the \emph{rank} $\rk (D)$ as follows.
In case $D$ is not linearly equivalent to an effective divisor then $\rk (D)=-1$.
Otherwise $\rk (D)$ is the minimal number $r$ such that for each effective divisor $E$ of degree $r$ on $\Gamma$ there exists an effective divisor $D'$ on $\Gamma$ linearly equivalent to $D$ and containing $E$.

\subsection{Reduced divisors}\label{subsection2.1}

The concept of reduced divisors is introduced in \cite{ref2} in the context of finite graphs.
Its generalization to the metric case is introduced at different papers (see e.g. \cite{ref4}, \cite{ref7}).
Let $D$ be a divisor on a metric graph $\Gamma$, let $X$ be a closed subset of $\Gamma$ and let $P \in \partial X$ be a boundary point of $X$.
We say a tangent direction $v \in T_P(\Gamma )$ leaves $X$ at $P$ if for an arc $\tau$ representing $v$ and $\varepsilon >0$ small one has $\tau (\varepsilon) \notin X$.
We say $P$ is a \emph{saturated boundary point} of $X$ with respect to $D$ if the number of tangent directions $v$ at $P$ leaving $X$ is at most $D(P)$, otherwise we call it \emph{non-saturated}.

\begin{definition}\label{definition1}
Let $D$ be a divisor on a metric graph $\Gamma$ and let $P_0\in \Gamma$.
We say $D$ is a $P_0$-reduced divisor in case the following conditions are satisfied
\begin{enumerate}
\item $\forall P \in \Gamma \setminus \{ P_0\} : D(P)\geq 0$
\item $\forall X \subset \Gamma$, a closed subset with $P_0 \notin X$, there exists $P \in \partial X$ such that $P$ is non-saturated with respect to $D$.
\end{enumerate}
\end{definition}

\begin{theorem}\label{theorem1}
Let $D$ be a divisor on a metric graph $\Gamma$ and let $P \in \Gamma$.
There is a unique $P$-reduced divisor $D_P$ on $\Gamma$ linearly equivalent to $D$.
\end{theorem}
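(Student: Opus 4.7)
I would treat existence and uniqueness separately. For existence, I would first reduce to the case where $D$ is effective off $P_0$; one may replace $D$ by $D + nP_0$ for large $n$ (the reduction at the end simply pulls those chips back to $P_0$), so assume $D(Q) \geq 0$ for every $Q \neq P_0$. The core step is an iterative \emph{firing} procedure. If $D$ is not $P_0$-reduced, condition (2) of Definition \ref{definition1} fails, so there exists a closed $X \subset \Gamma$ with $P_0 \notin X$ all of whose boundary points are saturated. For small $\varepsilon > 0$, consider the rational function $g_\varepsilon(Q) = \min\bigl(\varepsilon,\, d(Q, X)\bigr)$, which has slope $0$ in tangent directions that stay in $X$ and slope $+1$ in tangent directions leaving $X$ (up to distance $\varepsilon$). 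Then $D - \divisor(g_\varepsilon)$ is still non-negative off $P_0$ precisely because every $P \in \partial X$ is saturated, while chips are transported strictly outward (away from $X$) along each leaving direction. The new divisor is linearly equivalent to $D$, and iterating produces a sequence of equivalent non-negative-off-$P_0$ divisors in which chips migrate strictly towards $P_0$; existence then follows by a lexicographic/compactness argument showing the process stabilizes at a $P_0$-reduced representative.

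For uniqueness, suppose $D_1$ and $D_2$ are both $P_0$-reduced and linearly equivalent, and write $D_1 - D_2 = \divisor(f)$ for a rational function $f$ on $\Gamma$. Since $\Gamma$ is compact, the closed set $X = \{P \in \Gamma : f(P) = \max f\}$ is well defined. At any $P \in \partial X$, the slopes of $f$ in tangent directions staying in $X$ are $0$, and in each tangent direction leaving $X$ the slope is a negative integer, hence $\leq -1$. Summing over all tangent directions at $P$,
\[
\divisor(f)(P) \leq -\bigl|\{v \in T_P(\Gamma) : v \text{ leaves } X\}\bigr|.
\]
If $P_0 \notin X$, then for every $P \in \partial X$ we have $P \neq P_0$, so $D_1(P) \geq 0$, and the displayed inequality gives $D_2(P) \geq |\{v \in T_P(\Gamma) : v \text{ leaves } X\}|$; that is, every boundary point of $X$ is saturated with respect to $D_2$, contradicting that $D_2$ is $P_0$-reduced. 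Therefore $P_0 \in X$, i.e.\ $f(P_0) = \max f$. Swapping the roles of $D_1$ and $D_2$ (equivalently applying the argument to $-f$) yields $f(P_0) = \min f$ as well, so $f$ is constant on $\Gamma$ and $D_1 = D_2$.

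\textbf{Main obstacle.} The uniqueness half is a clean maximum-principle argument and I expect it to go through essentially as written. The subtle point is termination of the firing procedure in the existence half: because $\Gamma$ is a metric (not finite) graph and $\varepsilon$ can be arbitrarily small, one cannot simply count firings. Making the procedure terminate or, alternatively, extracting a $P_0$-reduced limit from a minimizing sequence of equivalent effective-off-$P_0$ divisors, using compactness of $\Gamma$ together with the piecewise-affine integer-slope structure of rational functions, is where the genuine technical work lies.
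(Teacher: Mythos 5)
The paper does not prove Theorem \ref{theorem1} at all: it is recalled from the literature (Baker--Norine for finite graphs, and \cite{ref4}, \cite{ref7} for the metric case), so there is no in-paper argument to compare yours against. Judged on its own, your uniqueness half is correct and complete: it is the standard maximum-principle argument, and the one point needing care --- that at a boundary point of $X=\{f=\max f\}$ every tangent direction either leaves $X$ or stays in $X$ for small time --- does hold because $f$ is piecewise affine near each point.

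The existence half, however, has two genuine gaps. First, the opening reduction is a non sequitur: replacing $D$ by $D+nP_0$ adds chips only at $P_0$ and does nothing to negative coefficients $D(Q)<0$ at $Q\neq P_0$, so you have not justified the assumption that $D$ is effective off $P_0$; showing that every divisor class contains a representative effective away from $P_0$ is itself a real (if standard) step. Second, and more seriously, the termination of the firing iteration is precisely the content of the existence statement on a metric graph, and you defer it. The set $X$ violating condition (2) is not unique, the firing amount $\varepsilon$ depends on $X$ and can shrink to zero along the iteration, and a naive sequence of firings need neither terminate nor converge to a reduced divisor; ``a lexicographic/compactness argument'' is not an argument until you exhibit the quantity that strictly decreases (or the functional that is minimized) and prove the infimum is attained by a $P_0$-reduced representative. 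The proofs in the references you would be replacing handle exactly this, e.g.\ by firing the canonical maximal unburnt set from the burning algorithm and tracking a monotone potential, or by characterizing $D_{P_0}$ as the minimizer of an energy/ordering on the (compact) set of representatives effective off $P_0$. As written, your proposal correctly identifies where the difficulty lies but does not overcome it, so the existence part is incomplete.
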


\subsection{The burning algorithm}\label{subsection2.2}

In \cite{ref6}, an algorithm is described such that, given an effective divisor $D$ on $\Gamma$ and $P \in \Gamma$, one finds the $P$-reduced divisor $D_P$ linearly equivalent to $D$.
From this algorithm it is clear that $D_P(P)$ is the maximal value $m=D'(P)$ for an effective divisor $D'$ linearly equivalent to $D$.

Part of that algorithm consists of checking that a given effective divisor $D$ is $P$-reduced. This can be made very visible using the so-called \emph{burning algorithm} (see also \cite{refextra}*{A.3}).

In case $Q \in \Supp (D)$ with $Q \neq P$ then we assume there are $D(Q)$ fire-fighters available at $Q$.
There is a fire starting at $P$ and following all tangent directions of $P$.
Each time the fire reaches some point $Q \in \Supp (D)$ along some tangent direction $v$ at $Q$ then one fire-fighter does stop the fire at $Q$ coming in along $v$.
Then this fire-fighter is occupied and not available any more.
In case the fire reaches some point $Q \in \Gamma$ and there is no more fire-fighter available at $Q$ (because $Q \notin \Supp (D)$ or all fire-fighters at $Q$ are occupied by fires coming in from other tangent directions at $Q$) then $Q$ gets burned and the fire goes on from $Q$ in all tangent directions not yet burned.
The divisor is $P$-reduced if and only if the whole graph gets burned by this fire.

\subsection{The Jacobian and Abel-Jacobi maps}\label{subsection2.3}

For making some dimension arguments we use the tropical Jacobian of a metric graph and the associated Abel-Jacobi maps defined in \cite{ref4}. In that paper the authors do introduce one-forms on $\Gamma$.
The space $\Omega (\Gamma)$ of those one-forms is a $g$-dimensional real vectorspace.
Similar to the case of curves integrals along paths on $\Gamma$ do define linear functions on $\Omega (\Gamma)$. The cycles on $\Gamma$ do define a lattice $\Lambda$ in the dual space $\Omega (\Gamma )^*$ and we call $J(\Gamma )=\Omega (\Gamma )^*/ \Lambda$ the \emph{tropical Jacobian} of $\Gamma$.

Fixing a base point $P_0$ on $\Gamma$ one also obtains for each non-negative integer $d$ an \emph{Abel-Jacobi map} $I(d) : \Gamma ^{(d)} \rightarrow J(\Gamma )$.
Here $\Gamma ^{(d)}$ is the $d$-the \emph{symmetric product} of $\Gamma$ parametrizing effective divisors of degree $d$ on $\Gamma$.
The structure of $\Gamma ^{(d)}$ is described in e.g. \cite{ref9}.
For $D_1, D_2 \in \Gamma^{(d)}$ one has $I(d)(D_1)=I(d)(D_2)$ if and only if $D_1$ and $D_2$ are linearly equivalent.
For $d\geq g$ the map $I(d)$ is surjective.
For $d<g$ the image of $I(d)$ is a finite union of images on $J(\Gamma )$ of bounded open subsets of affine subspaces of $\Omega (\Gamma)^*$ of dimension at most $d$.

\section{The example}\label{section3}

First we recall the definition of a free divisor on a graph $\Gamma$.

\begin{definition}\label{definition2}
An effective divisor $D$ on a metric graph $\Gamma$ is called free if for all $P\in \Gamma$ one has $\rk (D-P)< \rk (D)$.
\end{definition}

In this section we are going to prove the following theorem
\begin{theorem}\label{theorem2}
For all integers $r$, $d$ and $g$ satisfying $r\geq 1$, $d\leq r+g-2$ and $d-2r\geq 0$ there exists a metric graph $\Gamma$ of genus $g$ such that $\Gamma$ has a free divisor $D$ of degree $d$ with $\rk (D)=r$.
\end{theorem}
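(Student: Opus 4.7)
My plan is to construct, for each admissible triple $(r,d,g)$, an explicit metric graph $\Gamma$ of genus $g$ carrying a free divisor $D$ of degree $d$ and rank $r$. The inequality $d \le r+g-2$ translates via Riemann--Roch to $\rk(K_\Gamma - D) \ge 1$, so such $D$ are automatically very special; the content of the theorem is the freeness. In particular, the hyperelliptic recipe $D = rE + F$ recalled in the introduction is unusable because the support of $F$ forms a base locus, so I must find a graph on which the linear system attached to $D$ has enough mobility to free every point.

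The natural construction is to attach extra genus to a hyperelliptic backbone so as to ``liberate'' the tail. Take $\Gamma$ to be the union of a chain of $r$ bigons $B_1,\dots,B_r$ (each with two equal-length edges), glued end-to-end along vertices $v_0,\dots,v_r$, together with a flex region $R$ of genus $g-r$ attached to the backbone through a bridge. On this graph, the candidate divisor is $D = rE + F$, where $E = 2v_0$ generates the hyperelliptic pencil of the backbone and $F$ is an effective divisor of degree $k = d - 2r$ placed generically on $R$. The lower bound $\rk(D) \ge r$ is proved by a chip-firing argument: given any effective divisor $A$ of degree $r$, one absorbs one point of $A$ at a time, using either a local pencil $2v_{i-1} \sim 2v_i$ on the backbone or a shift of $F$ along $R$. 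The upper bound $\rk(D) \le r$ then follows from Clifford's inequality, since $d \le 2g-4$ and $D$ is very special.

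The main obstacle is freeness: for every $P \in \Gamma$, I must produce $r$ points $Q_1, \dots, Q_r$ such that $D - P - Q_1 - \cdots - Q_r$ is not equivalent to an effective divisor. I would verify this by passing to the $P_0$-reduced representative of $D - P - \sum Q_i$ (for a convenient auxiliary base point $P_0$) and invoking the burning algorithm of Section~\ref{subsection2.2}, with a case analysis by the position of $P$: inside a bigon, at a junction vertex $v_i$, on the bridge to $R$, or inside $R$. In each case, the $r$ witnesses are chosen so that the fire started at $P_0$ consumes all of $\Gamma$, certifying that no effective divisor linearly equivalent to $D - P - \sum Q_i$ exists. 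The technical delicacy lies in calibrating the edge lengths of the bigons and of $R$ generically enough that no accidental linear equivalences interfere with either the chip-firing step for the rank bound or the burning step for freeness; this case analysis, closing uniformly over all positions of $P$, is expected to be the bulk of the work.
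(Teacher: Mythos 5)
Your architecture inverts the one that actually works, and two of its load-bearing steps fail. First, the upper bound: for a very special divisor Clifford's inequality gives $\rk(D)\le \deg(D)/2=r+(d-2r)/2$, which yields $\rk(D)\le r$ only when $d-2r\le 1$; for $d-2r\ge 2$ it permits $\rk(D)=r+1$, so your claimed upper bound has no proof. (In the paper there is no separate upper bound: one proves $\rk_{\Gamma}(D-P)<r$ for \emph{every} $P$, which together with $\rk_{\Gamma}(D)\ge r$ forces $\rk_{\Gamma}(D)=r$ and freeness at once.) Second, the lower bound: with $D=2rv_0+F$ and $F$ generic of degree $k=d-2r$ on a genus-$(g-r)$ region $R$, take $A=Q_1+\cdots+Q_r$ with all $Q_i$ generic in $R$. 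Sliding $2rv_0$ along the backbone and across the bridge to the attachment point $w$ of $R$, you must make $2rw+F-\sum Q_i$, of degree $d-r\le g-2$, equivalent to an effective divisor; for $r=1$ this degree is at most $g-2<g-r$, and in general a generic $R$ and $F$ will not carry the required special divisor. The only escape is to make $R$ hyperelliptic with $2w$ its pencil, but then the whole graph reverts to the hyperelliptic situation and the points of $F$ become base points, destroying freeness. Your construction is caught between these two failure modes.

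The paper resolves exactly this tension by a different decomposition: \emph{all} of $D$ is placed on a subgraph $\Gamma_0$ of genus $a=d-2r$ (a chain of $a$ circles), where Riemann--Roch gives $\rk_{\Gamma_0}(D)\ge 2r$ automatically, and the remaining genus is supplied by $g-a$ loops attached at points $v_i$; a loop containing $n$ prescribed points of $A$ is covered by pulling only $n+1$ chips to its attachment point, so the total cost stays within $2r$ (Lemma~\ref{lemma1}). Freeness is then not verified point by point over the uncountably many $P\in\Gamma$: the condition ``there exists $P$ with $\rk_{\Gamma}(D-P)\ge r$'' is shown (Lemma~\ref{lemma3} and Subsection~\ref{subsection3.3}, via reduced divisors and the burning algorithm) to confine the class $I(a+2r)(D)$ to finitely many translates of images of sets of dimension at most $a-1$ inside the $a$-dimensional Jacobian $J(\Gamma_0)$, so a generic class on $\Gamma_0$ works. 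Your plan of a case analysis ``closing uniformly over all positions of $P$'' supplies no mechanism for that quantifier, and as written there is no correct candidate divisor to apply it to.
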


In case $d-2r=0$ this is well-known (see the Introduction), so we can assume $d-2r \geq 1$.
Writing $d=2r+a$ we obtain $a\geq 1$ instead of the condition $d-2r \geq 1$ and the condition $g \geq r+a+2$ instead of $d\leq r+g-2$. In particular we can assume $g \geq a+3 \geq 4$.
For a fixed value of the integer $a$ we are going to construct a graph $\Gamma$ such that for all integers $r$ satisfying $1\leq r\leq g-a-2$ there exists a free divisor $D$ on $\Gamma$ of degree $2r+a$ such that $\rk (D)=r$.

To construct the graph we start with a graph $\Gamma_a$ consisting of $a$ circles as used in \cite{ref17} (see figure \ref{Figuur 1}).
\begin{figure}[h]
\begin{center}
\includegraphics[height=2 cm]{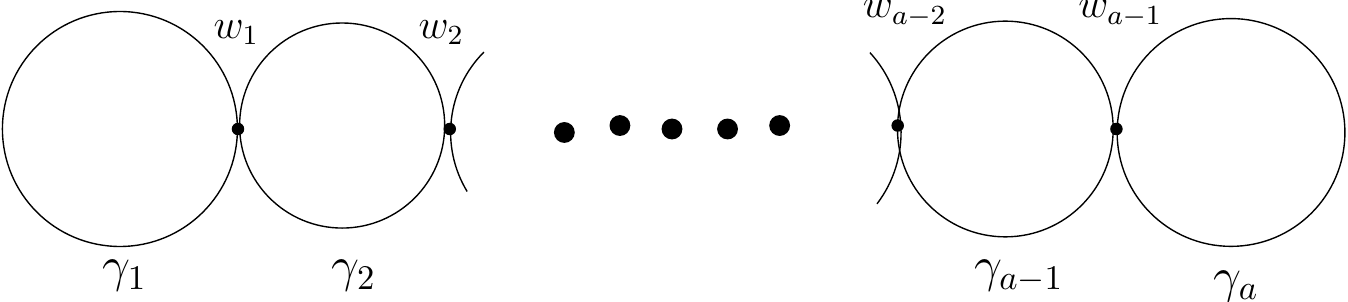}
\caption{the graph $\Gamma_a$}\label{Figuur 1}
\end{center}
\end{figure}
We do not need all assumptions from \cite{ref17}*{Definition 4.1}, we only need that the lengths of both edges from $w_i$ to $w_{i+1}$ are different for $1\leq i\leq a-2$.
Take $v_g \in \gamma_1\setminus \{w_1 \}$, $v_{g-1} \in \gamma_a \setminus \{ w_{a-1} \}$ such that $2v_g$ is not linearly equivalent to $2w_1$ on $\gamma_1$ and $2v_{g-1}$ is not linearly equivalent to $2w_{a-1}$ on $\gamma_a$ and some more different points $v_{a+1}, \cdots , v_{g-2}$ (remember $g\geq a+3$) on $\Gamma_0$ different from $w_1, \cdots, w_{a-1}$.
For each $a+1\leq i\leq g$ we attach a loop $\gamma_i$ to $\Gamma_0$ at $v_i$. This is the graph $\Gamma$ of genus $g$ we are going to use (see figure \ref{Figuur 2}).
\begin{figure}[h]
\begin{center}
\includegraphics[height=8 cm]{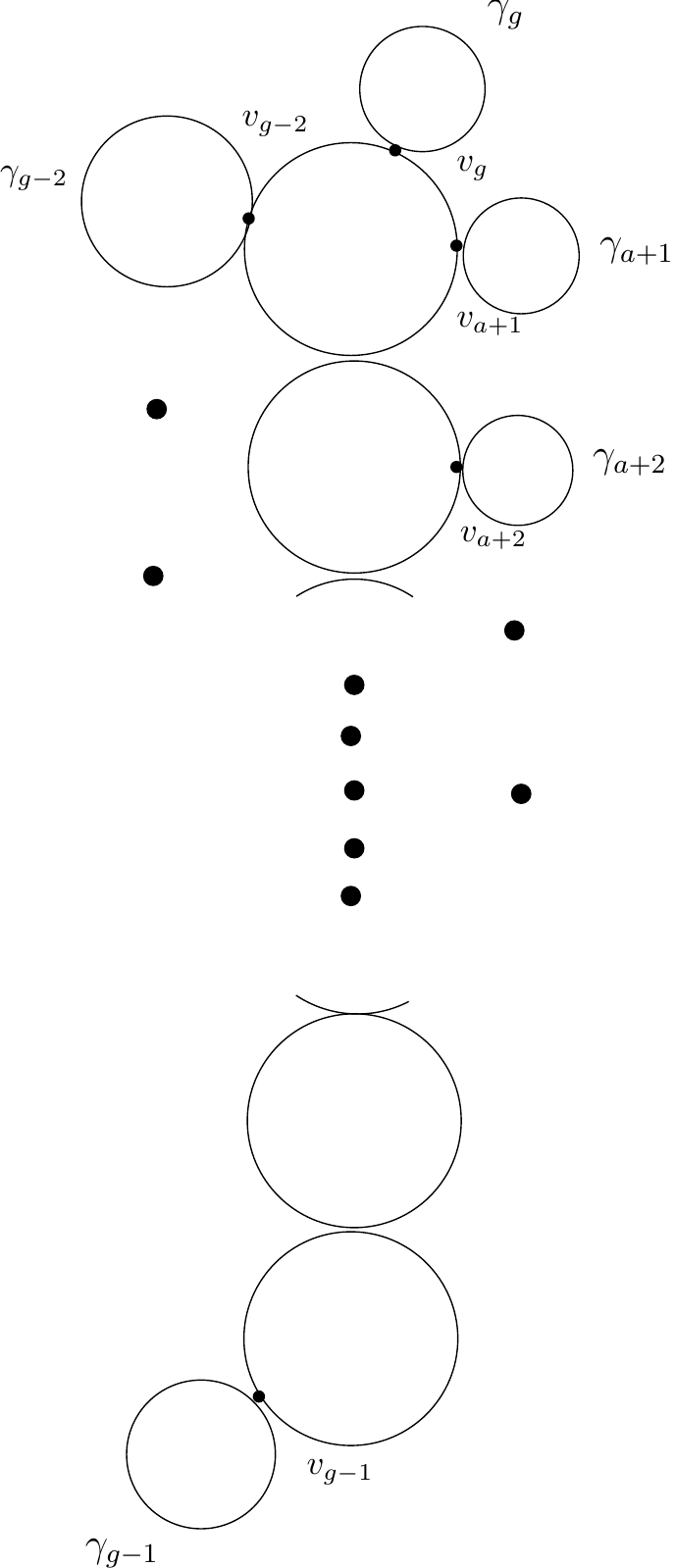}
\caption{the graph $\Gamma$}\label{Figuur 2}
\end{center}
\end{figure}
A divisor $E$ on $\Gamma_0$ or on $\gamma_i$ for some $a+1\leq i \leq g$ can also be considered as a divisor on $\Gamma$.
Therefore we write $\rk _{\Gamma_0} (E)$ (resp. $\rk _{\gamma_i}(E)$, $\rk _{\Gamma}(E)$) to denote its rank as a divisor on $\Gamma_0$ (resp. $\gamma_i$, $\Gamma$).

\begin{lemma}\label{lemma2}
In case $E$ and $E'$ are linearly equivalent divisors on $\Gamma _0$ or $\gamma _i$ for some $a+1 \leq i \leq g$ then $E$ and $E'$ are also linearly equivalent divisors on $\Gamma$.
\end{lemma}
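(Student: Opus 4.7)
The plan is to prove the lemma by constructing, from any rational function witnessing $E \sim E'$ on the subgraph, a rational function on all of $\Gamma$ whose divisor is the same. In both cases the recipe is to extend by a locally constant function on the complement, and the whole content is to verify that this extension is still piecewise affine with integer slopes and that no spurious contributions to the divisor appear at the attachment points.

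First I would handle the case where $E, E'$ are divisors on $\Gamma_0$. Write $E - E' = \divisor_{\Gamma_0}(f)$ for some rational function $f$ on $\Gamma_0$. Define $\tilde f : \Gamma \to \mathbb{R}$ by $\tilde f|_{\Gamma_0} = f$ and $\tilde f|_{\gamma_i} \equiv f(v_i)$ for each $a+1 \le i \le g$. Since $\gamma_i \cap \Gamma_0 = \{v_i\}$ and the two definitions agree at $v_i$, the function $\tilde f$ is continuous; on each $\gamma_i$ it is constant, hence piecewise affine with integer slopes, so $\tilde f$ is a rational function on $\Gamma$. Next I would compute $\divisor_\Gamma(\tilde f)(P)$. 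For $P \notin \{v_{a+1},\dots,v_g\}$ the function $\tilde f$ locally agrees either with $f$ (if $P \in \Gamma_0$) or with a constant (if $P$ lies in the interior of some $\gamma_i$), so the slopes match those of $f$ on $\Gamma_0$ or vanish respectively. At $P = v_i$ the two tangent directions of $\gamma_i$ contribute slope $0$, so the total outgoing slope equals the one computed inside $\Gamma_0$. Thus $\divisor_\Gamma(\tilde f) = E - E'$ as a divisor on $\Gamma$.

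The case where $E, E'$ lie on some $\gamma_i$ is completely symmetric. Starting from $E - E' = \divisor_{\gamma_i}(f)$ for a rational function $f$ on $\gamma_i$, set $\tilde f|_{\gamma_i} = f$ and extend by the constant value $f(v_i)$ on $\Gamma \setminus \gamma_i$ (this is well defined and continuous because $\Gamma \setminus \gamma_i$ is connected by construction and meets $\gamma_i$ only at $v_i$). The same local check shows that the slopes contributed by tangent directions lying in $\Gamma_0$ or in any $\gamma_j$ with $j \ne i$ are all $0$, so $\divisor_\Gamma(\tilde f) = \divisor_{\gamma_i}(f) = E - E'$.

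No real obstacle arises; the only thing one has to be careful about is the slope bookkeeping at the attachment points $v_i$, which is why I would write out the computation of $\divisor_\Gamma(\tilde f)(v_i)$ explicitly in both cases. Note also that the argument uses only the fact that each $\gamma_i$ meets the rest of $\Gamma$ at the single point $v_i$, so the specific geometry of $\Gamma_a$ plays no role here.
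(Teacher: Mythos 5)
Your proposal is correct and follows exactly the paper's argument: extend the rational function witnessing the equivalence by locally constant functions on the complement and check that no new slopes appear at the attachment points $v_i$. You merely spell out the slope bookkeeping and the second case ($E, E'$ on some $\gamma_i$) that the paper dismisses as ``similar.''
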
 
\begin{proof}
Assume $E$ and $E'$ are linearly equivalent divisors on $\Gamma _0$.
Let $f$ be a rational function on $\Gamma _0$ such that $\divisor (f)=E-E'$.
Let $g$ be the rational function on $\Gamma$ such that its restriction to $\Gamma_0$ is equal to $f$ and for $a+1 \leq i \leq g$ its restriction is the constant function with value $f(v_i)$, then on $\Gamma$ one has $\divisor (g)=E-E'$. This proves the lemma in the case of $\Gamma _0$, the other case is similar.
\end{proof}

\begin{lemma}\label{lemmaextra1}
Let $E$ and $E'$ be effective divisors on $\Gamma_0$ such that $E$ and $E'$ are linearly equivalent as divisors on $\Gamma$. Then $E$ and $E'$ are linearly equivalent as divisors on $\Gamma$.
\end{lemma}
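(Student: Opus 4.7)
The statement as written has a minor typo in the conclusion (the intended conclusion is that $E$ and $E'$ are linearly equivalent on $\Gamma_0$, since equivalence on $\Gamma$ is the hypothesis). My plan attacks this converse to Lemma 2.

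The plan is to start with a rational function $g$ on $\Gamma$ satisfying $\divisor_\Gamma(g) = E - E'$, which exists by hypothesis, and show that its restriction $g|_{\Gamma_0}$ already does the job on $\Gamma_0$. The key observation is that $E$ and $E'$ are both supported on $\Gamma_0$, so for each $i$ with $a+1\leq i\leq g$ and each point $Q$ in the open arc $\gamma_i\setminus\{v_i\}$, we have $\divisor_\Gamma(g)(Q) = 0$.

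The main step, and the technical heart of the argument, is to show that $g$ is constant on each loop $\gamma_i$. First, I would observe that at every point of $\gamma_i\setminus\{v_i\}$ the sum of outgoing slopes of $g$ is zero; since each such point has valence $2$ in $\Gamma$, the two slopes at that point are opposite. This forces $g$ restricted to the arc $\gamma_i\setminus\{v_i\}$ to have a single constant slope $s \in \mathbb{Z}$. Traversing the loop from $v_i$ back to $v_i$ along $\gamma_i$ of total length $L$, continuity of $g$ at $v_i$ gives $sL = 0$, hence $s = 0$, so $g|_{\gamma_i}$ is constant equal to $g(v_i)$.

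Once this is established, the final step is routine. The restriction $g|_{\Gamma_0}$ is a well-defined rational function on $\Gamma_0$ (piecewise affine with integer slopes on the edges of $\Gamma_0$). For any $P\in\Gamma_0\setminus\{v_{a+1},\ldots,v_g\}$, all tangent directions at $P$ lie in $\Gamma_0$, so $\divisor_{\Gamma_0}(g|_{\Gamma_0})(P) = \divisor_\Gamma(g)(P) = (E-E')(P)$. For $P = v_i$, the two tangent directions along $\gamma_i$ contribute zero to $\divisor_\Gamma(g)(v_i)$ because $g$ is constant on $\gamma_i$, so again $\divisor_{\Gamma_0}(g|_{\Gamma_0})(v_i) = \divisor_\Gamma(g)(v_i) = (E-E')(v_i)$. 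Thus $\divisor_{\Gamma_0}(g|_{\Gamma_0}) = E-E'$, proving linear equivalence on $\Gamma_0$.

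The only place I anticipate needing some care is the slope-analysis on $\gamma_i$: I want to make sure the argument correctly rules out the function having interior breakpoints on $\gamma_i\setminus\{v_i\}$. This is handled by the observation that the divisor condition at a valence-$2$ point forces equality of the two slopes, so the piecewise affine function has globally constant slope on the open arc; everything then closes up via continuity at $v_i$.
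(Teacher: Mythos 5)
Your proposal is correct and follows essentially the same route as the paper: take a rational function $g$ on $\Gamma$ with $\divisor(g)=E-E'$, note that $\divisor(g)$ vanishes on each open arc $\gamma_i\setminus\{v_i\}$ so that $g$ must be constant on each loop, and then restrict to $\Gamma_0$. You simply spell out in more detail the slope analysis that the paper compresses into ``since $\gamma_i$ is a loop this implies $f$ is constant on $\gamma_i$,'' and you correctly flag the typo in the statement's conclusion.
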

\begin{proof}
There is a rational function $f$ on $\Gamma$ such that $\divisor (f)=E-E'$.
For $a+1 \leq i \leq g$ there is no point $P$ on $\gamma_i \setminus \{v_i \}$ contained in the support of $\divisor (f)$.
Since $\gamma_i$ is a loop this implies $f$ is constant on $\gamma_i$.
Let $f'$ be the restriction of $f$ to $\Gamma_0$ then this implies $\divisor (f')=E-E'$ on $\Gamma_0$, hence $E$ and $E'$ are linearly equivalent divisors on $\Gamma_0$.
\end{proof}

\subsection{Effective divisors of degree $a+2r$ and rank $r$}\label{subsection3.1}

\begin{lemma}\label{lemma1}
Let $D$ be an effective divisor of degree $a+2r$ on $\Gamma_0$ with $1\leq r\leq g-a-2$ then $\rk_{\Gamma}(D)\geq r$.
\end{lemma}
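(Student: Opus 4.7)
To prove $\rk_\Gamma(D) \geq r$, fix an arbitrary effective divisor $E$ of degree $r$ on $\Gamma$; the goal is to produce $D' \sim_\Gamma D$ with $D' \geq E$. Decompose $E = E_0 + \sum_{i=a+1}^{g} E_i$, where $E_0$ is the restriction to $\Gamma_0$ and each $E_i$ is the restriction to $\gamma_i \setminus \{v_i\}$, and set $n_0 = \deg(E_0)$, $n_i = \deg(E_i)$, and $I = \{i : n_i \geq 1\}$. The plan is a two-stage manipulation: first, within $\Gamma_0$, move $D$ to a divisor $F \sim_{\Gamma_0} D$ that dominates the ``anchor'' divisor $A := E_0 + \sum_{i \in I}(n_i+1) v_i$; then, on each loop $\gamma_i$ with $i \in I$, replace the block $(n_i+1) v_i$ by an effective $G_i \sim_{\gamma_i} (n_i+1) v_i$ with $G_i \geq E_i$. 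Lemma \ref{lemma2} guarantees that both moves, carried out on subgraphs, lift to linear equivalences on $\Gamma$.

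The first stage uses the degree assumption on $D$. By Riemann-Roch on $\Gamma_0$ (of genus $a$), $\rk_{\Gamma_0}(D) \geq \deg(D) - a = 2r$. The anchor $A$ has degree $n_0 + \sum_{i\in I}(n_i + 1) = r + |I|$, and since $n_i \geq 1$ for each $i \in I$ we have $|I| \leq \sum_i n_i \leq r$. Thus $\deg(A) \leq 2r$, and the required $F \sim_{\Gamma_0} D$ with $F \geq A$ exists.

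The second stage is immediate from genus-one Riemann-Roch: each loop $\gamma_i$ has genus $1$, so any effective divisor on $\gamma_i$ of degree $n_i+1$ has rank $n_i$; in particular $(n_i+1) v_i$ dominates the degree-$n_i$ effective divisor $E_i$ after a move $G_i \sim_{\gamma_i} (n_i+1) v_i$. Setting $D' := F - \sum_{i \in I}(n_i+1)v_i + \sum_{i \in I} G_i$ then yields $D' \sim_\Gamma D$ (via Lemma \ref{lemma2} applied to $F$ and to each $G_i$) and $D' \geq E_0 + \sum_{i \in I} E_i = E$. The only real obstacle is the bookkeeping step $|I| \leq r$, which is precisely what keeps $\deg(A)$ within reach of the rank of $D$ on $\Gamma_0$; everything else is a clean application of Riemann-Roch on each of the two types of subgraph together with Lemma \ref{lemma2}.
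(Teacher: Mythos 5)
Your proof is correct and follows essentially the same route as the paper's: decompose $E$ into its $\Gamma_0$-part and loop-parts, use Riemann--Roch on $\Gamma_0$ (genus $a$) to dominate the anchor divisor $E_0+\sum_{i\in I}(n_i+1)v_i$ of degree $r+|I|\leq 2r$, then trade each block $(n_i+1)v_i$ for an effective divisor on the genus-one loop $\gamma_i$ containing $E_i$, gluing everything with Lemma~\ref{lemma2}. The only cosmetic difference is that the paper first disposes of the case $E\subset\Gamma_0$ separately, whereas your argument absorbs it as the case $I=\emptyset$.
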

\begin{proof}
Let $E=P_1+ \cdots + P_r$ be an effective divisor on $\Gamma$.
We need to prove that there exists an effective divisor $D'$ on $\Gamma$ linearly equivalent to $D$ containing $E$.
From the Riemann-Roch Theorem it follows $\rk_{\Gamma_0}(D)\geq 2r$, hence in case $P_i \in \Gamma_0$ for $1 \leq i \leq r$ then there exists a divisor $D'$ on $\Gamma _0$ linearly equivalent to $D$ containing $E$.
From Lemma \ref{lemma2} we know $D'$ is also linearly equivalent to $D$ on $\Gamma$.

So we can assume not all points $P_i$ belong to $\Gamma_0$.
Define $a+1 \leq i_1 < i_2 < \cdots < i_{r'} \leq g$ such that $E \cap (\gamma_j \setminus \{ v_j \}) \neq  \emptyset$ for some $a+1 \leq j \leq g$ if and only if there exists $1 \leq k\leq r'$ such that $i_k=j$.
Let $\deg(E \cap (\gamma_{i_k} \setminus \{ v_{i_k} \}))=n_k$ for $1 \leq k \leq r'$ and let $\deg ( E \cap \Gamma_0)=n_0$.
On $\Gamma_0$ we consider the divisor $E_0=(E \cap \Gamma_0)+ (n_1+1)v_{i_1} + \cdots + (n_{r'}+1)v_{i_{r'}}$.
It has degree $n_0+(n_1+1)+\cdots (n_{r'}+1)=r+r'\leq 2r$ hence there exists a divisor $D''$ on $\Gamma_0$ linearly equivalent to $D$ containing $E_0$.
From Lemma \ref{lemma2} it follows $D''$ is linearly equivalent to $D$ as a divisor on $\Gamma$.
For $1 \leq k \leq r'$ there exists a point $Q_k$ on $\gamma_{i_k}$ such that $(E \cap (\gamma_{i_k} \setminus \{ v_{i_k} \}) +Q_k$ is linearly equivalent to $(n_k+1)v_{i_k} $ on $\gamma_{i_k}$.
Again because of Lemma \ref{lemma2} linearly equivalence between those divisors also holds on $\Gamma$.
Therefore on $\Gamma$ the divisor $D$ is linearly equivalent to $D''-\sum_{k=1}^{r'}(n_k+1)v_{i_k}+\sum_{k=1}^{r'}(E \cap (\gamma_{i_k} \setminus \{ v_{i_k} \}) +Q_k$.
This is an effective divisor $D'$ on $\Gamma$ containing $E$.
\end{proof}

In order to finish the proof of Theorem \ref{theorem2} it is enough to prove that there exists an effective divisor $D$ on $\Gamma_0$ of degree $a+2r$ such that for each point $P$ on $\Gamma$ one has $\rk_{\Gamma} (D-P)<r$.
Indeed because of Lemma \ref{lemma1} it would imply $\rk_{\Gamma} (D)=r$ and $D$ is a free divisor on $\Gamma$.
We are going to show that the subset of $J(\Gamma_0)$ consisting of the images under $I(a+2r)$ of effective divisors $D$ on $\Gamma_0$ of degree $a+2r$ such that there is a point $P$ on $\Gamma$ with $\rk_{\Gamma} (D-P) \geq r$ is different from $J(\Gamma_0)$.
Since the map $I(a+2r) : \Gamma_0^{(a+2r)} \rightarrow J(\Gamma_0)$ is surjective this is enough to finish the proof of Theorem \ref{theorem2}.

\subsection{The case $P \notin \Gamma_0$}\label{subsection3.2}

\begin{lemma}\label{lemma3}
Let $D$ be an effective divisor of degree $a+2r$ on $\Gamma_0$ such that there exists a point $P \in \gamma_{a+1} \setminus \{ v_{a+1} \} $ satisfying $\rk _{\Gamma} (D-P)=r$ then there exists an effective divisor $E$ of degree $a-2$ on $\Gamma_0$ such that $D$ is linearly equivalent to $2(v_{a+1}+v_{a+2}+\cdots + v_{a+r+1})+E$ on $\Gamma_0$.
\end{lemma}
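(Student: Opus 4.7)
I would build the proof around the ``antipodal trick'' on the loop $\gamma_{a+1}$. Since $\gamma_{a+1}$ is a metric graph of genus $1$, its degree-$1$ Abel--Jacobi map is a bijection, so there is a unique point $Q_0 \in \gamma_{a+1} \setminus \{v_{a+1}\}$ with $P + Q_0 \sim 2v_{a+1}$ on $\gamma_{a+1}$, and hence on $\Gamma$ by Lemma \ref{lemma2}. The same argument gives, for each $j$ with $2 \leq j \leq r+1$, a unique antipode $Q_{a+j}^{\ast} \in \gamma_{a+j} \setminus \{v_{a+j}\}$ satisfying $2Q_{a+j}^{\ast} \sim 2v_{a+j}$ on $\Gamma$.

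My plan is an induction on $r$. The hypothesis $\rk_{\Gamma}(D-P) \geq r$ means that for every effective divisor $F$ of degree $r$ on $\Gamma$, the divisor $D - P - F$ is linearly equivalent to an effective divisor. Taking $F = Q_0 + (r-1)v_{a+1}$ yields $P + F \sim (r+1)v_{a+1}$, so $D - 2v_{a+1}$ is linearly equivalent to an effective divisor of degree $a + 2(r-1)$ on $\Gamma$. By the chip-moving argument used in the proof of Lemma \ref{lemma1} (push any chips on a loop $\gamma_{a+j}$ back to its base point $v_{a+j}$), this divisor can be represented by an effective divisor $E_1$ supported on $\Gamma_0$ of degree $a + 2(r-1)$.

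For the inductive step I would exhibit a point $P' \in \gamma_{a+2} \setminus \{v_{a+2}\}$ with $\rk_{\Gamma}(E_1 - P') = r-1$ and apply the lemma (with $r$ replaced by $r-1$ and $\gamma_{a+1}$ replaced by $\gamma_{a+2}$) to $E_1$. This yields $E_1 \sim 2(v_{a+2} + \cdots + v_{a+r+1}) + E$ on $\Gamma_0$ with $E \geq 0$ of degree $a-2$, and combining with $D \sim 2v_{a+1} + E_1$ gives the stated equivalence on $\Gamma$; Lemma \ref{lemmaextra1} then upgrades it to linear equivalence on $\Gamma_0$. The base case $r = 1$ would be handled by a direct argument combining the antipodal relations on $\gamma_{a+1}$ and $\gamma_{a+2}$ with the explicit structure of the chain-of-loops graph $\Gamma_0 = \Gamma_a$ (in particular the choices of $v_{g-1}, v_g$ and the differing edge lengths at the vertices $w_i$).

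The main technical obstacle is producing the point $P'$ in the inductive step, since the naive rank bound only gives $\rk_{\Gamma}(E_1) \geq r-1$, which forces $\rk_{\Gamma}(E_1 - P') = r-2$ whenever $P' \notin \operatorname{Supp}(E_1)$. The natural way around this is to replay the rank hypothesis on $D - P$ with a different effective divisor $F'$ of degree $r$, chosen so that after the antipodal simplification $P + Q_0 \sim 2v_{a+1}$ the remainder of $F'$ contains $P'$ as a free factor and forces $E_1 - P'$ to admit enough equivalent effective representatives. The cleanest way to carry this out is probably via a careful analysis of Dhar's burning algorithm applied to the $v_{a+1}$-reduced representative $\tilde{D}$ of $D$ on $\Gamma$, which already satisfies $\tilde{D}(v_{a+1}) \geq r$ (by subtracting $rv_{a+1}$ from $D-P$); the burning must enter each loop $\gamma_{a+j}$, which forces $\tilde{D}(v_{a+j}) \leq 1$ for $j \neq 1$ and restricts the divisor class of $D$ enough to yield the desired form.
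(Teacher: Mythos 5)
Your opening move is sound and close in spirit to the paper's: peeling off $2v_{a+1}$ from $D$ (the paper does this even more cheaply, since Riemann--Roch on $\Gamma_0$, which has genus $a$, gives $\rk_{\Gamma_0}(D)\geq 2r\geq 2$ directly, and then $2v_{a+1}\sim P+P'$ on the loop). But the heart of the lemma is extracting the remaining $r$ double points $2v_{a+2},\dots,2v_{a+r+1}$, and your inductive scheme does not get there. As you yourself note, after removing $2v_{a+1}$ the divisor $E_1$ of degree $a+2(r-1)$ only yields $\rk_{\Gamma}(E_1-P')\geq r-2$ for a point $P'\in\gamma_{a+2}\setminus\{v_{a+2}\}$, one short of what your inductive hypothesis requires; neither of the two repair strategies you sketch is carried out, and the second rests on an incorrect claim: for a $v_{a+1}$-reduced divisor the burning argument constrains the coefficients at points of the \emph{open} loops $\gamma_{a+j}\setminus\{v_{a+j}\}$ (their total is at most $1$ per loop), not the coefficient at the base point $v_{a+j}$, which can be arbitrarily large. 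Even your base case $r=1$ is left as ``a direct argument,'' yet it already contains the same difficulty (extracting $2v_{a+2}$).

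The missing ingredient is Corollary \ref{lemma5}: if $\rk_G(D'+P)\geq r$ with $P$ on an attached loop, then $\rk_{G_0}(D')\geq r$ on the graph with that open loop deleted --- the degree drops by one but the rank does \emph{not}. Applied with $\Gamma'=\Gamma\setminus(\gamma_{a+1}\setminus\{v_{a+1}\})$ and $D\sim D'+P+P'$, this gives $\rk_{\Gamma'}(D')\geq r$ for $D'$ of degree $a+2r-2$. One then imposes all $r$ points $P_{a+2},\dots,P_{a+r+1}$ (one on each of the loops $\gamma_{a+2},\dots,\gamma_{a+r+1}$) \emph{simultaneously}, and the uniqueness of reduced divisors via Corollary \ref{corollary} forces each loop's contribution to the class of $D$ to be $r_jv_j$ with $r_j\geq 2$. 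Your one-point-at-a-time induction on the full graph $\Gamma$ loses a unit of rank at every step precisely because it never invokes this ``rank survives deletion of the loop'' statement; without it, or some substitute of equal strength, the proof does not close. (A smaller, fixable issue: your claim that the effective representative of $D-2v_{a+1}$ can be pushed onto $\Gamma_0$ by ``moving chips back to the base points'' is not literally true for a single chip on a loop; one should instead pass to a $Q$-reduced representative with $Q\in\Gamma_0$ and argue as in Lemma \ref{lemma4} that it is supported on $\Gamma_0$.)
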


From Lemma \ref{lemma3} it follows that in case $D$ is an effective divisor of degree $a+2r$ on $\Gamma_0$ such that there exists a point $P \in \gamma_{a+1} \setminus \{ v_{a+1} \} $ satisfying $\rk_{\Gamma} (D-P)=r$ then $I(2r+a)(D)$ belongs to the subset $I(2r+2)(2(v_{a+1}+\cdots v_{a+r+1}))+I(a-2)(\Gamma_0^{(a-2)})$ of $J(\Gamma_0)$.
This subset of $J(\Gamma_0)$ is the union of finitely many images of bounded open subsets of affine subspaces of dimension at most $a-2$ of $\Omega (\Gamma)^*$.
A similar conclusion holds for $P$ on any $\gamma_i$ for $a+1\leq i\leq g$.
If $I(2r+a)(D)$ does not belong to such subset of $J(\Gamma)$ then $\rk _{\Gamma} (D-P)<r$ for all $P \notin \Gamma_0$.

In order to prove Lemma \ref{lemma3} we are going to use the following lemma and its corollaries.

\begin{lemma}\label{lemma4}
Let $G_0$ be a metric graph and let $G$ be the graph obtained from $G_0$ by attaching a loop $\gamma$ at some point $v \in G_0$ (see figure \ref{Figuur 3}). 
Let $P$ be a point of $\gamma \setminus \{ v \}$ and let $D$ be an effective divisor on $G_0$.
Let $Q$ be a point on $G_0$ and let $D_{Q,0}$ be the $Q$-reduced divisor on $G_0$ linearly equivalent to $D$ and let $D_Q$ be the $Q$-reduced divisor on $G$ linearly equivalent to $D+P$ then $D_Q=D_{Q,0}+P$.
\end{lemma}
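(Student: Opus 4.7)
My plan is to show that the candidate divisor $D_{Q,0}+P$ is itself $Q$-reduced on $G$ and linearly equivalent to $D+P$ on $G$; the conclusion $D_Q=D_{Q,0}+P$ then follows from the uniqueness statement in Theorem \ref{theorem1}.

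The non-negativity of $(D_{Q,0}+P)(R)$ for $R\neq Q$ is immediate: $D_{Q,0}$ is supported on $G_0$ and satisfies the analogous condition there by assumption, while $P\in\gamma\setminus\{v\}$ forces $P\neq Q$. For the linear equivalence on $G$, I would take a rational function $f$ on $G_0$ with $\divisor(f)=D-D_{Q,0}$ and extend it to $G$ by the constant value $f(v)$ on $\gamma$, exactly as in the proof of Lemma \ref{lemma2}. The extension has divisor $D-D_{Q,0}$ viewed on $G$, so adding $P$ to both sides yields $D+P\sim D_{Q,0}+P$ on $G$.

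The real content is the $Q$-reducedness on $G$, which I would verify using the burning algorithm recalled in Section \ref{subsection2.2}. Since $v$ is the unique point shared by $G_0$ and $\gamma$, no fire can pass between $G_0$ and $\gamma$ until $v$ has been burned. Therefore the evolution of the fire restricted to $G_0\subset G$ coincides with that of the burning algorithm applied to $G_0$ alone with divisor $D_{Q,0}$: the two extra tangent directions at $v$ pointing into $\gamma$ neither carry nor receive fire while $v$ is unburned, and the firefighter counts at every point of $G_0$ are unchanged. Since $D_{Q,0}$ is $Q$-reduced on $G_0$, the whole of $G_0$ eventually burns; in particular $v$ burns, and the fire then enters $\gamma$ through both tangent directions at $v$. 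Travelling along the two arcs of the loop, the fire reaches $P$ from both sides: the single firefighter at $P$ stops the first arrival, the second arrival burns $P$, and the remainder of $\gamma$ is consumed. Hence all of $G$ burns, so $D_{Q,0}+P$ is $Q$-reduced on $G$.

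The subtle point I expect is precisely the claim that the fire inside $G_0\subset G$ evolves exactly as the fire on $G_0$ alone; I would justify it by observing that $v$ is a cut point separating $\gamma\setminus\{v\}$ from $G_0\setminus\{v\}$, together with the monotone character of the burning process. Once this is accepted, the behaviour on $\gamma$ is a direct check, and Theorem \ref{theorem1} then identifies $D_Q$ with $D_{Q,0}+P$.
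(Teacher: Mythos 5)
Your proposal is correct and follows essentially the same route as the paper: establish linear equivalence on $G$ by extending the rational function constantly over $\gamma$ (as in Lemma \ref{lemma2}), then verify $Q$-reducedness of $D_{Q,0}+P$ with the burning algorithm (the fire consumes $G_0$, enters $\gamma$ from both directions at $v$, and overwhelms the single fire-fighter at $P$), and conclude by uniqueness of reduced divisors. Your extra remarks on the cut-point nature of $v$ only make explicit what the paper leaves implicit.
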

\begin{figure}[h]
\begin{center}
\includegraphics[height=3 cm]{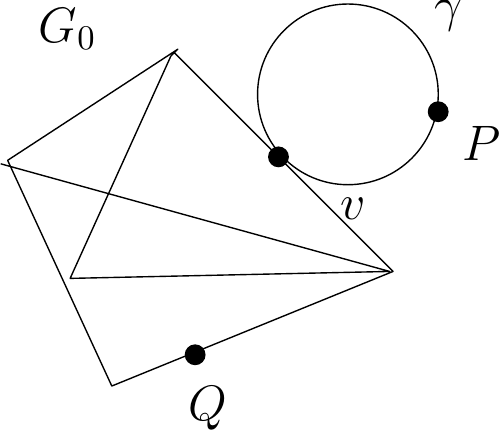}
\caption{the graph $\Gamma$}\label{Figuur 3}
\end{center}
\end{figure}
\begin{proof}
Since $D_{Q,0}$ is linearly equivalent to $D$ on $G_0$ it follows from Lemma \ref{lemma2} that $D_{Q,0}$ is linearly equivalent to $D$ on $G$, hence $D_{Q,0}+P$ is linearly equivalent to $D+P$ on $\Gamma$.
So it is enough to prove that $D_Q+P$ is a $Q$-reduced divisor on $G$.
This is clear using the burning algorithm: a fire starting at $Q$ burns the whole graph $G_0$ since $D_Q$ is a $Q$-reduced divisor on $\Gamma_0$.
Hence a fire leaves $v$ in both tangent directions to $\gamma$ at $v$.
These fires do reach $P$ along both tangent directions to $\gamma$ at $P$, hence also $P$ gets burned because there is only one fire-fighter at $P$.
This proves $D_Q+P$ is $Q$-reduced on $G$.
\end{proof}

\begin{corollary}\label{lemma5}
Let $G_0$ be a metric graph and let $G$ be the graph obtained from $G_0$ by attaching a loop $\gamma$ at some point $v \in G_0$. Let $P$ be a point of $\gamma \setminus \{ v \}$ and let $D$ be an effective divisor on $G_0$.
If $\rk_G(D+P)\geq r$ then $\rk_{G_0}(D)\geq r$.
\end{corollary}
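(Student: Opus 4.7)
Fix an effective divisor $E$ on $G_0$ of degree $r$; the goal is to show that $D-E$ is linearly equivalent on $G_0$ to an effective divisor, which by definition of the rank suffices for $\rk_{G_0}(D)\geq r$. The plan is to use the hypothesis $\rk_G(D+P)\geq r$ to produce an effective divisor $F$ on $G$ with $F\sim_G D+P-E$, then to recover an effective representative of $D-E$ on $G_0$ by passing to $v$-reduced divisors on $G$.

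The central step is a mild extension of Lemma \ref{lemma4} to the (possibly non-effective) divisor $D-E$ on $G_0$, namely the identity
\[
(D+P-E)_{v,G}\;=\;(D-E)_{v,G_0}+P,
\]
where $(D-E)_{v,G_0}$ is the $v$-reduced representative of $D-E$ on $G_0$ supplied by Theorem \ref{theorem1}. The linear equivalence on $G$ is immediate from Lemma \ref{lemma2}, whose proof is insensitive to effectiveness. The $v$-reducedness on $G$ is verified by the burning-algorithm argument of Lemma \ref{lemma4}: any closed $X\subset G$ with $v\notin X$ either meets only $G_0$, in which case it inherits a non-saturated boundary point from the $v$-reducedness of $(D-E)_{v,G_0}$ on $G_0$ (chip counts and tangent directions at points of $G_0\setminus\{v\}$ are the same in $G$ and in $G_0$), or it lies in $\gamma\setminus\{v\}$, in which case the only chip in $\gamma\setminus\{v\}$ is the single one at $P$ and an elementary check provides a non-saturated boundary point.

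Once the identity is in hand, equating $v$-reduced forms of $F$ and $D+P-E$ on $G$ yields $F_{v,G}=(D-E)_{v,G_0}+P$. Since $F$ is effective, so is $F_{v,G}$; and since $P\notin G_0$, the coefficient $F_{v,G}(P)$ equals exactly $1$. Hence $(D-E)_{v,G_0}=F_{v,G}-P$ is effective on $G_0$, and the relation $(D-E)_{v,G_0}\sim_{G_0}D-E$ built into the definition of the reduced divisor produces the effective divisor on $G_0$ equivalent to $D-E$ that we sought. The only real obstacle is the extension of Lemma \ref{lemma4} to the non-effective divisor $D-E$, which is mild: effectiveness was used there only to invoke Lemma \ref{lemma2} and to check $v$-reducedness, and $(D-E)_{v,G_0}$ is automatically effective off $v$ by Definition \ref{definition1}, which is all the burning argument needs outside of $v$.
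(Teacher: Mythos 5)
Your proof is correct, but it takes a genuinely different route from the paper's. The paper proves the corollary by induction on $r$: the case $r=1$ is handled with Lemma \ref{lemma4} applied at an arbitrary base point $Q\in G_0$ (the $Q$-reduced form of $D+P$ is $D_{Q,0}+P$, and rank $\geq 1$ forces $D_{Q,0}$ to contain $Q$), and the inductive step replaces $D$ by an equivalent effective divisor containing $P_1+\cdots+P_{r-1}$ and applies the base case to the residual divisor. You instead treat all $r$ at once by fixing an effective $E$ of degree $r$ on $G_0$ and proving the identity $(D+P-E)_{v,G}=(D-E)_{v,G_0}+P$ for the possibly non-effective divisor $D-E$; comparing with the $v$-reduced form of an effective representative of $D+P-E$ then forces $(D-E)_{v,G_0}$ to be effective. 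This buys a shorter, induction-free argument, at the cost of having to extend Lemma \ref{lemma2} and Lemma \ref{lemma4} beyond effective divisors --- an extension you correctly identify as harmless, since Definition \ref{definition1}(1) guarantees non-negativity away from the base point, which is all the burning argument uses. Two small points to tighten: your dichotomy for closed sets $X$ ("meets only $G_0$" versus "lies in $\gamma\setminus\{v\}$") omits the mixed case, though applying the $G_0$-argument to $X\cap G_0$ (or simply running the burning algorithm from $v$, which burns $G_0$ and then the loop) disposes of it; and the assertion that $F_{v,G}$ is effective at $v$ itself should be justified by the maximality property of $D_P(P)$ quoted in Section \ref{subsection2.2}.
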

\begin{proof}
Assume $\rk_G(D+P)\geq 1$ and let $Q$ be a  point on $G_0$.
Let $D_Q$ be the $Q$-reduced divisor linearly equivalent to $D$ on $G_0$.
From Lemma \ref{lemma4} we know $D_Q+P$ is the $Q$-reduced divisor on $G$ linearly equivalent to $D+P$.
Since $\rk_G(D+P)\geq 1$ it follows $D_Q+P$ contains $Q$, hence $D_Q$ contains $Q$.
This proves $\rk_{G_0}(D)\geq 1$.

Now assume $\rk_G(D+P)\geq r> 1$, by means of induction we can assume that $\rk_{G_0}(D) \geq r-1$.
Take $P_1+ \cdots + P_r$ on $G_0$.
We can assume $D=P_1+ \cdots P_{r-1}+D'$ for some effective divisor $D'$ on $G_0$.
Then $\rk_G (D'+P)\geq 1$ hence $\rk_{G_0}(D')\geq 1$.
Hence $D'$ is linearly equivalent to $P_r+D"$ on $G_0$ for some effective divisor $D"$ on $G_0$.
Then $D"+P_1+\cdots P_r$ is linearly equivalent to $D$ on $G_0$, hence $\rk_{G_0}(D)\geq r$.
\end{proof}

\begin{corollary}\label{corollary}
Let $G_0$ be a metric graph and let $G$ be the graph obtained from $G_0$ by attaching a loop $\gamma$ at some point $v \in G_0$. Let $D$ be an effective divisor of degree $d$ on $G_0$.
There do not exist an effective divisor $D'$ of degree $d-1$ on $G_0$ and $P \in \gamma \setminus \{ v \}$ such that $D$ and $D'+P$ are linearly equivalent divisors on $\Gamma$.
\end{corollary}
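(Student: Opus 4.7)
The plan is to argue by contradiction and compare $Q$-reduced representatives on $G$. Suppose $D$ and $D'+P$ are linearly equivalent on $G$, with $D,D'$ supported on $G_0$ and $P\in\gamma\setminus\{v\}$. Fix any point $Q\in G_0$, and let $D_{Q,0}$, $D'_{Q,0}$ denote the $Q$-reduced divisors on $G_0$ linearly equivalent to $D$ and $D'$ respectively.

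For the right-hand side, Lemma~\ref{lemma4} applied to $D'$ and $P$ identifies the $Q$-reduced divisor on $G$ linearly equivalent to $D'+P$ as $D'_{Q,0}+P$. For the left-hand side, I claim that $D_{Q,0}$, viewed as a divisor on $G$, is itself the $Q$-reduced divisor on $G$ linearly equivalent to $D$. Linear equivalence on $G$ is a direct analogue of Lemma~\ref{lemma2} for the pair $(G_0,G)$: any rational function on $G_0$ realising the equivalence extends to $G$ by being constant on $\gamma$. The genuinely new point is $Q$-reducedness on $G$, which I would verify with the burning algorithm. A fire at $Q$ burns all of $G_0$ by $Q$-reducedness on $G_0$, so in particular reaches $v$; since $\Supp(D_{Q,0})\subset G_0$, no fire-fighter sits on $\gamma\setminus\{v\}$, so the two tangent directions of $\gamma$ at $v$ carry the fire around the whole loop.

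Uniqueness of the $Q$-reduced representative (Theorem~\ref{theorem1}) now forces
\[
D_{Q,0}=D'_{Q,0}+P
\]
as divisors on $G$. Reading off the coefficient at $P$ gives the contradiction: the left-hand side vanishes at $P$ because $D_{Q,0}$ is supported on $G_0$ and $P\in\gamma\setminus\{v\}$, while the right-hand side has coefficient at least $1$ at $P$ since $D'_{Q,0}$ is also supported on $G_0$.

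The only non-formal step is the burning-algorithm verification that a $Q$-reduced divisor on $G_0$ remains $Q$-reduced on $G$ after attaching the loop; everything else is a direct application of Lemmas~\ref{lemma2} and \ref{lemma4} together with the uniqueness in Theorem~\ref{theorem1}. Because there is no divisor coefficient anywhere on $\gamma\setminus\{v\}$, that burning check is essentially immediate, so I do not expect a serious obstacle.
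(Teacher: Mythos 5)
Your argument is correct and is essentially the paper's own proof: fix $Q\in G_0$, pass to $Q$-reduced representatives on $G_0$, use Lemma~\ref{lemma4} (and the burning algorithm) to see these remain $Q$-reduced on $G$, and derive a contradiction with the uniqueness in Theorem~\ref{theorem1} by looking at the coefficient at $P$. Your explicit burning-algorithm check that $D_{Q,0}$ itself stays $Q$-reduced on $G$ is a welcome bit of extra care, since the paper cites Lemma~\ref{lemma4} for that step even though the lemma as stated only treats divisors of the form $D+P$.
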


\begin{proof}
Assume $D'$ is an effective divisor of degree $d-1$ on $G_0$ and $P \in \gamma \setminus \{ v \}$ such that $D$ and $D'+P$ are linearly equivalent divisors on $G$.
Choose $Q \in G_0$ and let $D_Q$ (resp. $D'_Q$) be the $Q$-reduced divisors on $G_0$ linearly equivalent to $D$ (resp. $D'$) on $G_0$.
Because of Lemma \ref{lemma4} $D_Q$ (resp. $D'_Q+P$) is the $Q$-reduced divisor on $G$ linearly equivalent to $D$ (resp. $D'+P$) on $G$.
By assumption $D$ and $D'+P$ are linearly equivalent divisors on $G$.
Since $D_Q \neq D'_Q+P$ this contradicts Theorem \ref{theorem1}.
\end{proof}

Now we are going to prove Lemma \ref{lemma3}. On $\Gamma_0$ the divisor $D$ is linearly equivalent to $2v_{a+1}+D'$ for some effective divisor $D'$ of degree $a+2r-2$ on $\Gamma_0$.
On the loop $\gamma_{a+1}$ one has $2v_{a+1}$ is linearly equivalent to $P+P'$ for some point $P'$ different from $v_{a+1}$.
From Lemma \ref{lemma2} it follows $D$ is linearly equivalent to $D'+P+P'$ on $\Gamma$.
By assumption we have $\rk_{\Gamma}(D'+P')\geq r$.
Let $\Gamma '=\Gamma \setminus (\gamma_{a+1} \setminus \{ v_{a+1} \})$ then from Corollary \ref{lemma5} we obtain $\rk_{\Gamma '}(D')\geq r$.
For $a+2 \leq j \leq a+r+1$ let $P_j$ be a point on $\gamma_j$ different from $v_j$.
On $\Gamma'$ there exists an effective divisor $D''$ linearly equivalent to $D'$ containing $P_{a+2}+ \cdots + P_{a+r+1}$.
Write $D''=D''_{a+2}+\cdots +D''_g+E'$ with $\Supp(D''_j)\subset \gamma_j \setminus \{ v_j \}$ for $a+2 \leq j \leq g$ and $\Supp (E')\subset \Gamma_0$.
On $\gamma_j$ there exist a non-negative integer $r_j$ and an effective divisor $E''_j$ of degree at most 1 such that $D''_j$ is linearly equivalent to $r_jv_j+E''_j$.
From Lemma \ref{lemma2} we know that $D'''=2v_{a+1}+r_{a+2}v_{a+2}+\cdots+r_gv_g+E''_{a+2}+\cdots +E''_g+E'$ is linearly equivalent to $D$ as a divisor on $\Gamma$

From Corollary \ref{corollary} it follows $E''_{a+2}+\cdots +E''_g=0$.
However for $a+2\leq j\leq a+r+1$ one has $P_j \in D''_j$ and $D''_j$ is linearly equivalent to $r_jv_j$ on $\gamma_j$, hence $r_j\geq 2$.
Hence $D'''=2(v_{a+1}+\cdots +v_{a+r+1})+E$ for some effective divisor $E$ on $\Gamma_0$.
From Lemma \ref{lemmaextra1} it follows $D'''$ is also linearly equivalent to $D$ as divisors on $\Gamma_0$.
This finishes the proof of Lemma \ref{lemma3}.

\subsection{The case $P \in \Gamma_0$}\label{subsection3.3}

Let $D$ be an effective divisor of degree $a+2r$ on $\Gamma_0$ and assume there exists $P \in \Gamma_0$ such that $\rk_{\Gamma} (D-P)=\rk_{\Gamma} (D)$.
Let $D'$ be an effective divisor on $\Gamma_0$ such that $D'+P$ is linearly equivalent to $D$ as a divisor on $\Gamma_0$.
From Lemma \ref{lemma2} we know $D'$ is linearly equivalent to $D-P$ as a divisor on $\Gamma$, hence $\rk_{\Gamma}(D')=r$.
Therefore we have to consider effective divisors $D'$ on $\Gamma_0$ of degree $a+2r-1$ such that $\rk_{\Gamma} (D') \geq r$.
The divisors $D$ we are considering are of the type $D'+P$ for such divisor $D'$ on $\Gamma_0$ and $P$ a point on $\Gamma_0$. 
So, let $D'$ be such a divisor on $\Gamma_0$.

For $a+1 \leq i \leq a+r-1$ we choose $P_i \in \gamma _i \setminus \{ v_i \}$ (hence in case $r=1$ this means we take no point, also $a+r-1 \leq g-2$ because $r \leq g-a-2$).
Also we choose points $P_g \in \gamma _g \setminus \{ v_g \}$ and $P_{g-1} \in \gamma _{g-1} \setminus \{ v_{g-1} \}$.
On $\Gamma$ there exists an effective divisor $D''$ linearly equivalent to $D'$ containing $P_{a+1} + \cdots + P_{a+r-1} + P_g$.
Repeating the arguments used in the proof of Lemma \ref{lemma3}, using Corollary \ref{corollary} we find $D'$ is linearly equivalent on $\Gamma_0$ to a divisor $D'''=2(v_{a+1} + \cdots + v_{a+r-1} + v_g)+E$ for some effective divisor $E$ on $\Gamma_0$ of degree $a-1$.
Using $P_{g-1}$ instead of $P_g$ we find the existence of an effective divisor $E'$ on $\Gamma_0$ of degree $a-1$ such that $2(v_{a+1} + \cdots + v_{a+r-1} + v_{g-1}) + E'$ is linearly equivalent to $D'$ on $\Gamma_0$.
It follows that $2v_g+E$ is linearly equivalent to $2v_{g-1}+E'$ on $\Gamma_0$, hence $\rk_{\Gamma_0} (2v_g+E-2v_{g-1})\geq 0$.

In case $E$ is linearly equivalent on $\Gamma_0$ to an effective divisor $F$ containing $v_{g-1}$ then $D=D'+P$ is linearly equivalent to $2(v_{a+1} + \cdots + v_{a+r-1} + v_g)+v_{g-1}+(F-v_{g-1}+P)$.
In that case $I(a+2r)(D)$ belongs to the subset $I(2r+1)(2(v_{a+1} + \cdots + v_{a+r-1} + v_g)+v_{g-1})+I(a-1)(\Gamma_0^{(a-1)})$ of $J(\Gamma_0)$.
This subset is the image of a finite number of bounded open subsets of affine subspaces of $\Omega (\Gamma_0)^*$ of dimension at most $a-1$.
A similar conclusion can be made in case $E$ would be linearly equivalent on $\Gamma_0$ to an effective divisor $F'$ containing $v_g$.
We can assume $I(a+2r)(D)$ does not belong to such subset of $J(\Gamma)$.

Now we consider the situation where $E$ is not linearly equivalent on $\Gamma_0$ to an effective divisor $F$ (resp. $F'$) containing $v_{g-1}$ (resp. $v_g$).
Using arguments as in Lemma \ref{lemma2} it is clear that each effective divisor on $\Gamma_0$ is linearly equivalent to an effective divisor whose support has at most one point in each $\gamma_i \setminus \{ w_i \}$ for $1 \leq i \leq a$ with $w_a=v_{g-1}$.
This implies $E$ is linearly equivalent on $\Gamma_0$ to a divisor $Q_1 + \cdots + Q_{a-1}$ with $Q_i \in \gamma_{j_i} \setminus \{ w_{j_i} \}$ for $1 \leq i \leq a-1$ with $1 \leq j_1 < j_2 < \cdots < j_{a-1} \leq a$ and $Q_1 \neq v_g$ in case $j_1=1$.
We are going to describe a certain finite subset $S$ of $\Gamma_0$ satisfying the following property.
If $Q_i \notin S$ for each $1 \leq i \leq a-1$ then  $\rk_{\Gamma_0} (2v_g+E-2v_{g-1})= -1$.
For $Q \in S$ and $Q_i = Q$ for some $1 \leq i \leq a-1$ one obtain $D=D'+P$ is linearly equivalent as a divisor on $\Gamma_0$ to $2(v_{a+1}+ \cdots + v_{a+r-1}+v_g)+Q+F'+P$ for some effective divisor $F'$ of degree $a-2$.
In that case $I(a+2r)(D)$ belongs to the subset $I(2r+1)(2(v_{a+1} + \cdots + v_{a+r-1} + v_g)+Q)+I(a-1)(\Gamma_0^{(a-1)})$ of $J(\Gamma_0)$.
Again this subset is the image of the union of finitely many bounded subsets of affine subspaces of $\Omega (\Gamma_0)^*$ of dimension at most $a-1$.
In case $I(a+2r)(D)$ does not belong to such subset of $J(\Gamma_0)$ then we obtain $\rk _{\Gamma} (D-P)<r$ for all $P\in \Gamma$.

Let $S$ be the finite set consisting of the following points: the points $Q'_1$ and $Q''_1$ on $\gamma_1$ such that $2v_g+Q'_1$ is linearly equivalent to $3w_1$ on $\gamma_1$ and $v_g+Q''_1$ is linearly equivalent to $2w_1$ on $\gamma_1$; 
for $2 \leq i \leq a-1$ the points $Q'_i$ and $Q''_i$ on $\gamma_i$ such that $2w_{i-1}+Q'_i$ is linearly equivalent to $3w_i$ and $w_{i-1}+Q''_i$ is linearly equivalent to $2w_i$ on $\gamma_i$
and the point $Q'_a$ on $\gamma_a$ such that $w_{a-1}+Q'_a$ is linearly equivalent to $2v_{g-1}$ on $\gamma_a$.
So we can assume $Q_i \notin S$ for $1\leq i\leq a-1$.
Define $0 \leq e\leq a-1$ such that $j_e=e$ and $j_{e+1}=e+2$ (no condition on $j_0$ in case $e=0$ and no condition on $j_a$ in case $e=a-1$).
We start with $2v_g+Q_1+\cdots +Q_{a-1}$.
In case $e>0$ then on $\gamma_1$ we obtain $2v_g+Q_1$ is linearly equivalent to $2w_1+R_1$ with $R_1 \notin \{ w_1,v_g \}$.
Hence because of Lemma \ref{lemma2} $2v_g+Q_1 + \cdots +Q_{a-1}$ is linearly equivalent to $R_1+2w_2+Q_2+ \cdots +Q_{a-1}$ on $\Gamma_0$.
Continuing in this way, in case $e=a-1$ we obtain $2v_g+E$ is linearly equivalent to a divisor $R_1+ \cdots \
+ R_{a_1}+2w_{a-1}$ on $\Gamma_0$ with $R_i \in \gamma_i \setminus \{w_{i-1},w_i \}$ for $1\leq i\leq a-1$ (here $w_0=v_g$).
Since $2w_{a-1}$ is not linearly equivalent to $2v_{g-1}$ on $\gamma_a$ we obtain $2v_g+E$ is linearly equivalent to $R_1 + \cdots +R_a + v_{g-1}$ with also $R_a \in \gamma_a \setminus \{w_{i-1},v_{g-1} \}$.
Using the burning algorithm on $\Gamma_0$ one easily sees that this divisor is $v_{g-1}$-reduced.
This implies $R_1+ \cdots + R_a - v_{g-1}$ is $v_{g-1}$-reduced too, hence $\rk_{\Gamma_0} (2v_g+E-2v_{g-1})=-1$ in this case.
In case $1\leq e<a-1$ then we obtain $2v_g+E$ is linearly equivalent to $R_1 + \cdots +R_e +2w_e + Q_{e+1} + \cdots + Q_{a-1}$ with $R_i \in \gamma_i \setminus \{ w_{i-1}, w_i \}$ in case $1 \leq i\leq e$.
Since $2v_g$ is linearly equivalent to $R_1+w_1$ with  $R_1 \in \gamma_1 \setminus \{ v_g,w_1 \}$ on $\gamma_1$ and $2w_{i-1}$ is linearly equivalent to $R_i+w_i$ with $R_i \in \gamma_i \setminus \{ w_{i-1},w_i \}$  on $\gamma_i$ for $2\leq i\leq a-1$ we obtain $2v_g+E$ is linearly equivalent to $R_1 + \cdots R_{e+1} + w_{e+1} + Q_{e+1} + \cdots + Q_{a-1}$ in case $0 \leq e < a-1$.
From this we obtain $2v_g+E$ is linearly equivalent on $\Gamma_0$ to a divisor $R_1 + \cdots + R_{a-1} + w_{a-1}+ Q_{a-1}$ with $R_i \in \gamma_i \setminus \{ w_{i-1},w_i \}$ for $e+2 \leq i \leq a-1$ and therefore it is linearly equivalent on $\Gamma_0$ to a divisor $R_1 + \cdots + R_a + v_{g-1}$ with $R_a \in \gamma_a \setminus \{ w_{a-1},v_{g-1} \}$.
As already mentioned before this is a $v_{g-1}$-reduced divisor, implying $\rk _{\Gamma_0} (2v_g+E-2v_{g-1})=-1$.
 
\begin{bibsection}
\begin{biblist}

\bib{ref9}{article}{
	author={Amini, O.},
	title={Reduced divisors and embeddings of tropical curves},
	journal={Trans. AMS},
}
\bib{ref8}{article}{
	author={Baker, M.},
	title={Specialisation of linear systems from curves to graphs},
	journal={Algebra Number Theory},
	volume={2},
	year={2008},
	pages={613-653},
}
\bib{ref2}{article}{
	author={Baker, M.},
	author={Norine, S.},
	title={Riemann-Roch and Abel-Jacobi theory on a finite graph},
	journal={Advances in Mathematics},
	volume={215},
	year={2007},
	pages={766-788},
}
\bib{refextra}{article}{
	author={Baker, M.},
	author={Shokrieh, F.},
	title={Chip-firing games, potential theory on graphs and spanning trees},
	journal={J. Combinatorial Theory Series A},
	volume={120},
	pages={164-182},
	year={2013},
}
\bib{ref12}{article}{
	author={Cartwright, D.},
	author={Jensen, D.},
	author={Payne, S.},
	title={Lifting divisors on a generic chain of loops},
	journal={Canadian Mathematical Bulletin},
}
\bib{ref17}{article}{
	author={Cools, F.},
	author={Draisma, J.},
	author={Payne, S.},
	author={Robeva, E.},
	title={A tropical proof of the Brill-Noether Theorem},
	journal={Advances in Mathematics},
	year={2012},
}
\bib{ref15}{article}{
	author={Coppens, M.},
	title={Clifford's Theorem for graphs},
}
\bib{ref5}{article}{
	author={Facchini, L.},
	title={On tropical Clifford's theorem},
	journal={Ricerche Mat.},
	volume={59},
	year={2010},
	pages={343-349},
}
\bib{ref3}{article}{
	author={Gathmann},
	author={Kerber},
	title={A Riemann-Roch theorem in tropical geometry},
	journal={Math. Z.},
	volume={289},
	year={2007},
	pages={217-230},
}
\bib{ref7}{article}{
	author={Hladk\'{y}, J.},
	author={Kr\'{a}l, D.},
	author={Norine, S.},
	title={Rank of divisors on tropical curves},
}
\bib{ref14}{article}{
	author={Jensen, D.},
	title={The locus of Brill-Noether general graphs is not dense},
}
\bib{ref10}{article}{
	author={Kawaguchi, S.},
	author={Yamaki, K.},
	title={Ranks of divisors on hyperelliptic curves and graphs under specialization},
}
\bib{ref11}{article}{
	author={Kawaguchi, S.},
	author={Yamaki, K.},
	title={Algebraic rank on hyperelliptic graphs and graphs of genus 3},
}
\bib{ref13}{article}{
	author={Lim, C.M.},
	author={Payne, S.},
	author={Potashnik, N.},
	title={A note on Brill-Noether theory and rank-determining sets},
	journal={Int. Math. Res. Notes},
	year={2012},
	pages={5484-5504},
}
\bib{ref6}{article}{
	author={Luo, Y.},
	title={Rank-determining sets of metric graphs},
	journal={Journal of Comb. Theory Series A},
	volume={118},
	year={2011},
	pages={1775-1793},
}
\bib{ref16}{article}{
	author={Martens, H.H.},
	title={Varieties of special divisors on a curve II},
	journal={J. reine angew. Math.},
	year={1968},
	volume={233},
}
\bib{ref4}{article}{
	author={Mikhalkin, G.},
	author={Zharkov, I.},
	title={''Tropical curves, their Jacobians and theta functions'' in \emph{Curves and Abelian varieties}},
	journal={Contemp. Math.},
	volume={465},
	year={2008},
	pages={203-231},	
}

\end{biblist}
\end{bibsection}

\end{document}